\numberwithin{equation}{section}
\theoremstyle{plain} 
\newtheorem{theorem}{Theorem}[section]
\newtheorem{proposition}[theorem]{Proposition}
\newtheorem{lemma}[theorem]{Lemma}
\theoremstyle{definition}
\renewcommand{\cite}{\citet}
\def\^#1{\relax\ifmmode {\mathaccent"705E #1} \else {\accent94 #1} \fi}
\def\~#1{\relax\ifmmode {\mathaccent"707E #1} \else {\accent"7E #1} \fi}
\def\*#1{\relax#1^\ast}
\edef\-#1{\relax\noexpand\ifmmode {\noexpand\bar{#1}} \noexpand\else \-#1\noexpand\fi}
\def\>#1{\vec{#1}}
\def\.#1{\dot{#1}}
\def\atop{\@@atop}
\def\%#1{\mathcal{#1}}
\renewcommand{\leq}{\leqslant}
\renewcommand{\geq}{\geqslant}
\renewcommand{\phi}{\varphi}
\newcommand{\eps}{\varepsilon}
\newcommand{\eq}{\eqref}
\newcommand{\dtv}{\mathop{d_{\mathrm{TV}}}}
\newcommand\indep{\protect\mathpalette{\protect\independenT}{\perp}}
\def\independenT#1#2{\mathrel{\rlap{$#1#2$}\mkern2mu{#1#2}}}
\newcommand{\toinf}{\to\infty}
\newcommand{\Po}{\mathop{\mathrm{Poisson}}}
\newcommand{\Vol}{\mathop{\mathrm{Vol}}}
\newcommand{\law}{\mathscr{L}}
\newcommand{\eqlaw}{\stackrel{\mathscr{D}}{=}}
\newcommand{\dbox}{d_\square}
\def\be#1{\begin{linenomath*}\begin{equation*}#1\end{equation*}\end{linenomath*}}
\def\ben#1{\begin{linenomath*}\begin{equation}#1\end{equation}\end{linenomath*}}
\def\bes#1{\begin{linenomath*}\begin{equation*}\begin{split}#1\end{split}\end{equation*}\end{linenomath*}}
\def\besn#1{\begin{linenomath*}\begin{equation}\begin{split}#1\end{split}\end{equation}\end{linenomath*}}
\def\bm#1{\begin{linenomath*}\begin{multline*}#1\end{multline*}\end{linenomath*}}
\def\bmn#1{\begin{linenomath*}\begin{multline}#1\end{multline}\end{linenomath*}}
\def\given{\mskip 0.5mu plus 0.25mu\vert\mskip 0.5mu plus 0.15mu}
\newcounter{@bracketlevel}
\def\@bracketfactory#1#2#3#4#5#6{
\expandafter\def\csname#1\endcsname##1{%
\addtocounter{@bracketlevel}{1}%
\global\expandafter\let\csname @middummy\alph{@bracketlevel}\endcsname\given%
\global\def\given{\mskip#5\csname#4\endcsname\vert\mskip#6}\csname#4l\endcsname#2##1\csname#4r\endcsname#3%
\global\expandafter\let\expandafter\given\csname @middummy\alph{@bracketlevel}\endcsname
\addtocounter{@bracketlevel}{-1}}%
}
\def\bracketfactory#1#2#3{%
\@bracketfactory{#1}{#2}{#3}{relax}{0.5mu plus 0.25mu}{0.5mu plus 0.15mu}
\@bracketfactory{b#1}{#2}{#3}{big}{1mu plus 0.25mu minus 0.25mu}{0.6mu plus 0.15mu minus 0.15mu}
\@bracketfactory{bb#1}{#2}{#3}{Big}{2.4mu plus 0.8mu minus 0.8mu}{1.8mu plus 0.6mu minus 0.6mu}
\@bracketfactory{bbb#1}{#2}{#3}{bigg}{3.2mu plus 1mu minus 1mu}{2.4mu plus 0.75mu minus 0.75mu}
\@bracketfactory{bbbb#1}{#2}{#3}{Bigg}{4mu plus 1mu minus 1mu}{3mu plus 0.75mu minus 0.75mu}
}
\newcounter{ctr}\loop\stepcounter{ctr}\edef\X{\@Alph\c@ctr}%
\edef\csname s\X\endcsname{\noexpand\mathscr{\X}}
\edef\csname c\X\endcsname{\noexpand\mathcal{\X}}
\edef\csname b\X\endcsname{\noexpand\boldsymbol{\X}}
\edef\csname I\X\endcsname{\noexpand\mathbbm{\X}}
\let\@IP\IP\let\IP\undefined
\DeclareMathOperator{\IP}{\@IP}
\DeclareMathOperator{\I}{\mathrm{I}}
\renewcommand\section{\@startsection {section}{1}{\z@}%
{-3.5ex \@plus -1ex \@minus -.2ex}%
{1.3ex \@plus.2ex}%
{\center\small\sc\MakeTextUppercase}}
\def\subsection#1{\@startsection {subsection}{2}{0pt}%
{-3.5ex \@plus -1ex \@minus -.2ex}%
{1ex \@plus.2ex}%
{\bf\mathversion{bold}}{#1}}
\def\subsubsection#1{\@startsection{subsubsection}{3}{0pt}%
{\medskipamount}%
{-10pt}%
{\normalsize\itshape}{\kern-2.2ex. #1.}}
\begin{document}

\title{\sc\bf\large\MakeUppercase{Respondent driven sampling and sparse graph convergence}}
\author{\sc Siva Athreya$^{1}$ \and \sc Adrian R\"ollin$^{2}$}
\date{}
\maketitle

\footnotetext[1]{Indian Statistical Institute, 8th Mile Mysore Road, Bangalore,
560059 India.\\ Email: athreya@isibang.ac.in}

\footnotetext[2]{Department of Statistics and Applied Probability, National University of
Singapore, 6~Science Drive~2, Singapore 117546. Email: adrian.roellin@nus.edu.sg } 

\begin{abstract} 
We consider a particular respondent-driven sampling procedure governed by a graphon. By a specific clumping procedure of the sampled vertices we construct a sequence of sparse graphs. If the sequence of the vertex-sets is stationary then the sequence of sparse graphs converge to the governing graphon in the cut-metric. The tools used are concentration inequality for Markov chains and the Stein-Chen method.\end{abstract}

\noindent\emph{2000 Mathematics Subject Classification.} Primary 05C80, 60J20; Secondary 37A30, 9482.

\noindent\emph{Keywords.} Respondent Driven Sampling; random graph; sparse graph limits; dense graph limits.

\section{Introduction}

\noindent Respondent Driven Sampling (RDS), popularised by
\cite{Heckathorn1997}, is a method to sample from hard-to-reach
populations, such as drug users, MSM and people with HIV, and it is
being routinely used in studies involving such populations. The
sampling procedure is subject to various biases, one of which is a
bias towards individuals with higher degrees, as these are more likely
to appear in the sample.

How this bias affects the network as a whole has been described by
\cite{Athreya2016} in the context of dense graph limits. The model
considered there is defined in terms of a two-step procedure. First,
vertices are sampled according to an ergodic process (the important
point to note is that the vertices need not be sampled independently
of each other). Second, edges between vertices are sampled
independently of each other, where the probability of an edge is
determined via a \emph{graphon} representing the underlying network.

Dense graphs are at one extreme of graph sequences. These are graphs on~$n$ vertices with the number of edges being of order~$n^2$, which is far more than what is observed in real world networks. At the
opposite end are sequences of graphs with bounded (average) degree
and consequently having order~$n$ edges. These have a separate
limiting theory which is not quite applicable to many real world networks. There is class of graph sequences between these two extremes, called \emph{sparse graphs} --- these are graphs for which the average degree grows in the number of vertices, but only at sub-linear speed.

The purpose of this note is to extend the work of
\cite{Athreya2016} to sparse graphs, and to consider more realistic models of sampling. Since RDS data typically comes in the
form of trees, the actual graphs are those with average degrees
remaining bounded as the number of nodes~$n$ grows. We propose a model
where ``close enough'' participants are ``clumped'' together so that
the average degree now grows in~$n$. Our main result is that the
random sparse graph sequence obtained through a specific
respondent-driven sampling procedure converges almost surely to the
graphon underlying the network in the cut-metric, provided the
sequence of the vertex-sets is stationary

The method of proof in this article is entirely different from that of
\cite{Athreya2016}. This is mainly due to the fact that, unlike in the
dense case, subgraphs counts no longer characterise graph convergence.
We compare our random sparse graph sequence with an ``expected''
(deterministic) sparse graph via a concentration inequality. We then
use the Stein-Chen method to compare this deterministic sparse graph
to a sequence of graphs which are close to the graphon of the
underlying network.

The rest of the article is organised as follows. In Section~\ref{sgc}
we provide a brief introduction to sparse graph convergence. In
Section~\ref{mm} we describe our model and state our main result
(Theorem~\ref{mt1}). We present the proof of the main result in
Section~\ref{pmt1}. We then conclude with some remarks in a final
discussion section on Respondent Driven Sampling and Dense graph sequences.

\paragraph{Acknowledgements:} Adrian R\"ollin was supported by NUS Research Grand R-155-000-167-112. Siva Athreya was supported by CPDA grant from the Indian Statistical Institute and an ISF-UGC project grant.

\section{Sparse graph convergence} \label{sgc}

\noindent This section is a very brief introduction to sparse graph
convergence. The convergence of sparse graphs was initiated by
\cite{Bollobas2009} and then the~$L_p$ theory was established in
\cite{Borgs2014a} and \cite{Borgs2014b}. We present the minimal amount of
material necessary to formulate and prove our main result. We first
define weighted graphs, followed by definition of graphon and conclude
with a brief discussion on a convergence result.

\paragraph{Weighted graphs.} 

Consider a graph~$G$, given by its set of vertices~$V(G)$ and set of edges~$E(G)$. A (edge-)weighted graph~$G$ is simply a graph which has, in addition, a weight function~$\beta(G) = (\beta_{ij}(G))_{i,j\in V(G)}$, where, for each~$\{i,j\}\in E(G)$, we interpret the value~$\beta_{ij}(G)$ as the weight of that edge. By making the convention that~$\beta_{ij}(G)=0$ whenever there is no edge between vertices~$i$ and~$j$, the information about~$E(G)$ is contained in~$\beta(G)$, so that any weighted graph is determined by~$V(G)$ and~$\beta(G)$. Moreover, any unweighted graph can be interpreted as a weighted graph by setting~$\beta_{ij}(G)=1$ whenever~$\{i,j\}\in E(G)$.

For any weighted graph~$G$ and any constant~$c\in \IR$, we shall define~$cG$ to be the weighted graph on the same set of vertices and edge weights~$\beta_{ij}(cG) = c\beta_{ij}(G)$.

\paragraph{Graphons.} 

A graphon is any symmetric, function~$\kappa:[0,1]^2\to \IR_+$ which is integrable; note that we restrict ourselves to non-negative graphons, whereas \cite{Borgs2014a} allow for more general graphons.  For any graphon~$\kappa$, the \emph{cut-norm of~$\kappa$} is defined as
\be{ \label{cut-norm} 
 	\norm{\kappa}_\square 
		\vcentcolon= \sup_{S,T \subseteq [0,1]} \bbbabs{\int_{S \times T} \kappa(x,y) dx dy},
} 
where the supremum is taken over Lebesgue-measurable subsets of~$[0,1]$. The~$L_1$-norm of~$\kappa$ is given by
\be{ \label{L1norm}
	\norm{\kappa}_1 
	\vcentcolon=  \int_{[0,1]\times[0,1]}\abs{\kappa(x,y)} dx dy.
}
For any two graphons~$\kappa_1$ and~$\kappa_2$, we let
\ben{ 
	d_\square(\kappa_1, \kappa_2) \vcentcolon= \norm{\kappa_1 - \kappa_2}_\square,
	\qquad
  d_1(\kappa_1, \kappa_2) \vcentcolon= \norm{\kappa_1 - \kappa_2}_1.
}
Since a Lebesgue measure preserving transformation of~$[0,1]$ will not change the norm of a graphon, it is customary to define the cut-metric~$\delta_\square$ on graphons by
\ben{
	\delta_\square(\kappa_1, \kappa_2) \vcentcolon= \inf_\sigma d_\square(\kappa_1^\sigma, \kappa_2),
}
where the infimum ranges over all measure-preserving bijections~$\sigma:[0,1]\to[0,1]$, and where the graphon~$\kappa^\sigma$ is defined as~$\kappa^\sigma(x,y) = \kappa(\sigma(x), \sigma(y))$.

Every weighted graph~$G$ is naturally associated with a graphon~$\kappa_G$ in the following way. First, divide the interval~$[0,1]$ into intervals~$I_1, \dots, I_{\abs{V(G)}}$ of lengths~$1/\abs{V(G)}$ for each~$i \in V(G)$. The function~$\kappa_G$ is then given the constant value~$\beta_{ij}(G)$ on~$I_i \times I_j$ for every~$i, j \in V(G)$. It is easily verified that~$\kappa_G$ is indeed a graphon.

Thus, even if~$G$ and~$G^\prime$ have different set of vertices, we can define their cut-distance through the cut-distance of their associated graphons; that is,
\be{
	\delta_{\square}(G, G^\prime) \vcentcolon= \delta_\square(\kappa_G,\kappa_{G^\prime}).
}
If two weighted graphs~$G$ and~$G^\prime$ have the same set of vertices~$V(G)$, then it is clear that we can express their cut-distance as
\be{
	d_{\square}(G, G^\prime)  
	= \max_{S,T \subseteq V(G)} \bbbabs{\sum_{i \in S, j \in T} \beta_{ij}(G) - \beta_{ij}(G')}.
}
Finally, if~$\kappa$ is a graphon and~$G$ is a weighted graph, then we will define
\ben{
	d_\square(G, \kappa) \vcentcolon=   d_\square{(\kappa_G, \kappa)},
	\qquad
	\delta_\square(G, \kappa) \vcentcolon=   \delta_\square{(\kappa_G, \kappa)}.
}

\paragraph{Convergence to graphon.} 

Let~$\kappa$ be a graphon with~$\norm{\kappa}_1 > 0$.  Let~$\rho_n >0$ satisfy~$\rho_n \to 0$ and~$n \rho_n \to \infty$ as~$n \to \infty$. Let the vertex set be given by~$[n] = \{1, 2, \dots,
n\}$. Let~$U_1, \dots, U_n$ be i.i.d.\ chosen uniformly in~$[0,1]$.

Define~$G_n\equiv G(n, \kappa,\rho_n)$ to be the graph defined by
connecting~$i$ and~$j$ with probability~$\min\{\rho_n \kappa(U_i,U_j),
1\}$. It is clear that~$G_n$ is a sparse graph sequence and in \cite[Theorem 2.14 and Corollary 2.15]{Borgs2014a} it is shown that,  with probability~$1$,
\be{
   d_\square\bklr{G_n/\rho_n, \kappa} \to 0 \qquad \text{and} \qquad  \delta_\square\bklr{G_n/\norm{{G_n}}_1, \kappa/
  \norm{\kappa}_1} \to 0
}
as~$n \to \infty$. In this article we generalise the above  result when the vertex labels come from a Markov Chain and the sparse graph is constructed after suitable clumping.

\section{Model and main results} \label{mm}

\subsection{Constructing a random graph from RDS}

\noindent We shall construct a sparse graph on~$[n]$ vertices driven by Respondent Driven Sampling (RDS).  We will sample~$N$ individuals, labelled $X_1,\dots,X_N$, where~$X_i\in[0,1]$. We note that the label space is chosen arbitrarily to be the unit interval only for the sake of mathematical convenience. After sampling, the individuals are clumped into~$n$ equally spaced bins, which we represent by the intervals $A_{n,i} = [(i-1)/n,i/n)$, where~$1\leq i\leq n$ (it is understood that~$A_{n,n}$ also includes the right-most point 1). We connect~$i$ and~$j$ if two successive individuals fall into bin~$A_i$ followed by bin~$A_j$ or vice-versa. We chose $N$ in such a way that the graph constructed is sparse and we establish an~$L_1$ limit for the same. We begin with a precise definition of the sampling scheme via a Markov chain.



\paragraph{Markov Chain representing RDS.} Let~$\kappa$ be a graphon. Let
$(\Omega, {\cal F}, \IP)$ be a probability space, on which we define a
Markov chain~$X=\{X_k\}_{k\geq 0}$ with transition probabilities given
by 
\be{ 
	\IP[X_{m+1}\in dy|X_m=x] =
  \frac{\kappa(x,y)}{\int_0^1\kappa(z,x)dz}dy.
}
Since~$\kappa$ is symmetric, the Markov chain is time-reversible with stationary distribution 
\be{ 
	\pi(dx) 
		= \frac{\int_0^1\kappa(x,u)du}{\int_0^1\int_0^1 \kappa(u,v)dudv}dx.  
}
We shall assume that~$X_0~\eqlaw \pi$, which means the chain is stationary. Then the
probability of seeing a transition from~$dx$ to~$dy$ is given by
\besn{\label{mcjoint}
	\IP[X_m\in dx,X_{m+1}\in dy] & =
  \pi(dx)\frac{\kappa(x,y)}{\int_0^1\kappa(x,z)dz}dy  =
  \frac{\kappa(x,y)}{\int_0^1\int_0^1 \kappa(u,v)dudv}dxdy.
}

\paragraph{Sparse Random Graph from RDS.} Let~$\kappa$ be a graphon. Let~$n \geq 1$ and~$N\equiv N(n)$. We will now construct a random graph~$G(n,N, X, \kappa)$ via the following steps:
\begin{itemize}
\item Let the vertex set be~$[n]\vcentcolon= \{1, 2, \ldots, n\}$.
\item Let~$X_1,\dots,X_N$ be a realisation of the stationary Markov Chain defined in the previous section up to time~$N$. 
\item Equi-partition the unit interval by the intervals~$A_{n,1},\dots,A_{n,n}$. For~$1 \leq i, j \leq n$ with~$i\neq j$, define  
\be{
	I_n(i,j) = \begin{cases} 
		1 & \begin{minipage}[c]{0.6\textwidth}if there exists $0\leq m < N$ such that either~$X_m\in A_{n,i}$ and~$X_{m+1}\in A_{n,j}$, or $X_m\in A_{n,j}$ and $X_{m+1}\in A_{n,i}$,\end{minipage}\\[2ex]
		0 & \text{otherwise.} \end{cases}
}
\item For~$1 \leq i,j \leq n$ with $i\neq j$, connect~$i$ and~$j$ if~$I_n(i,j) = 1$, and leave it unconnected otherwise.
\end{itemize}
If we choose $N(n)$ appropriately (i.e. $N(n) = o(n^{2})$) then the above random graph will be a sparse random graph sequence.

\subsection{Main Result}

\noindent Let $\kappa$ be a given graphon, and consider the sparse graph sequence~$G_{n}\equiv G(n,N,X,\kappa)$ defined as in the previous paragraph. We shall make the following assumptions on~$\kappa$ and~$N$.
\begin{description}
\item[\hskip2em Assumption (K1).] There are a constant~$\delta > 0$ and an integrable function~$\phi: [0,1] \to \IR_+$ such that
\besn{ \label{conditiononkernel}
  0< \delta\leq \frac{\kappa(x,y)}{\int_0^1\kappa(x,z)dz} 
  \leq \phi(y),\qquad 0\leq x, y\leq 1.
}
\item[\hskip2em Assumption (N1).] There are constants~$\alpha$ and~$\lambda$, where~$0 < \alpha \leq 1$ and $\lambda>0$, such that the sequence $N\equiv N(n)$ satisfies 
\besn{\label{nNlim} 
	\lim_{n \to \infty} \frac{N}{n^{1+\alpha}} = \lambda.} 
\end{description}
We are now ready to state the  main result.

\begin{theorem} \label{mt1} Under Assumption (K1) and Assumption (N1), and if $0 < \alpha < 1$,
\be{
 \lim_{n\to \infty}d_\square\bbbklr{\frac{n^2}{N}G_{n} \,\, ,\,\, \frac{\kappa}{\norm{\kappa}_1}} = 0
}  
almost surely with respect to~$\IP$.
\end{theorem}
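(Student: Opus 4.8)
The plan is to squeeze $\tfrac{n^2}{N}G_n$ against $\kappa/\|\kappa\|_1$ by interposing two weighted graphs on $[n]$. Let $\bar G_n$ be the \emph{deterministic} weighted graph whose weight on $\{i,j\}$ is $p_n(i,j):=\IP[I_n(i,j)=1]$, and let $\kappa_n$ be the pixelation of $\kappa$, i.e.\ the step graphon taking the value $n^2\int_{A_{n,i}\times A_{n,j}}\kappa$ on $A_{n,i}\times A_{n,j}$ (with the diagonal blocks set to $0$). By the triangle inequality for $d_\square$ it suffices to show, as $n\to\infty$: (i) $d_\square(\tfrac{n^2}{N}G_n,\tfrac{n^2}{N}\bar G_n)\to0$ almost surely; (ii) $d_\square(\tfrac{n^2}{N}\bar G_n,\kappa_n/\|\kappa\|_1)\to0$; and (iii) $d_\square(\kappa_n/\|\kappa\|_1,\kappa/\|\kappa\|_1)\to0$. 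Part (iii) is deterministic and routine: $d_\square\le d_1$, one has $\|\kappa_n-\kappa\|_1\to0$ because $\kappa$ is integrable and the grid is refined, and zeroing the diagonal blocks costs at most $\int_{\{|x-y|\le 1/n\}}\kappa(x,y)\,dx\,dy\to0$ by dominated convergence.

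For (i) the plan is to use a concentration inequality for Markov chains. Assumption (K1) provides the Doeblin minorisation $\IP[X_{m+1}\in dy\mid X_m=x]\ge\delta\,dy$, so $X$ is uniformly ergodic with a mixing time that is bounded uniformly in $n$. For fixed $S,T\subseteq[n]$ the cut count $F_{S,T}:=\sum_{i\in S,\,j\in T}I_n(i,j)$ is a function of $(X_0,\dots,X_N)$ with bounded differences: changing one $X_m$ alters only the two transitions $(X_{m-1},X_m)$ and $(X_m,X_{m+1})$, hence moves $F_{S,T}$ by at most a universal constant. A concentration bound for bounded-difference functionals of uniformly ergodic Markov chains then gives $\IP[\,|F_{S,T}-\IE F_{S,T}|\ge t\,]\le2\exp(-c\,t^2/N)$ with $c=c(\delta)>0$. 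Choosing $t=t_n$ a little above $\sqrt{Nn}$ (say $t_n=n^{1+\alpha/2}\log n$) and taking a union bound over the at most $4^n$ pairs $(S,T)$ yields a summable probability, because $N\gg n$ (this is where $\alpha>0$ is used) makes $4^n\exp(-c\,t_n^2/N)\to0$ rapidly; Borel--Cantelli then gives $\tfrac1N\max_{S,T}|F_{S,T}-\IE F_{S,T}|\to0$ almost surely, which is precisely (i).

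For (ii) the plan is to use the Stein--Chen method. Let $W_n(i,j)$ count the transitions $(X_m,X_{m+1})$ that fall in $A_{n,i}\times A_{n,j}$ or in $A_{n,j}\times A_{n,i}$, so that $I_n(i,j)=\mathbbm{1}[W_n(i,j)\ge1]$ and, by \eqref{mcjoint}, $\IE W_n(i,j)=\lambda_{ij}:=\tfrac{2N}{\|\kappa\|_1}\int_{A_{n,i}\times A_{n,j}}\kappa$; thus $\tfrac{n^2}{N}\lambda_{ij}$ equals the value of $\kappa_n/\|\kappa\|_1$ on that block, up to the explicit constant fixed by \eqref{mcjoint}. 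Since $p_n(i,j)\le\lambda_{ij}$ and $\lambda_{ij}-p_n(i,j)=\IE(W_n(i,j)-1)_+\le\tfrac12\IE[W_n(i,j)(W_n(i,j)-1)]$, it is enough to bound $\tfrac1N\sum_{i,j}(\lambda_{ij}-p_n(i,j))$. Fix $\eps_n:=n^{(\alpha-1)/2}\to0$ and split the blocks accordingly. On blocks with $\lambda_{ij}\ge\eps_n$ use $\lambda_{ij}-p_n(i,j)\le\lambda_{ij}$; the aggregate $\kappa$-mass of these blocks is $\int_{\{\kappa_n\ge c\eps_n n^{1-\alpha}\}}\kappa_n$, which tends to $0$ by uniform integrability of $\{\kappa_n\}$ because $\eps_n n^{1-\alpha}\to\infty$ (here $\alpha<1$ enters). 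On blocks with $\lambda_{ij}<\eps_n$ expand $\IE[W_n(W_n-1)]$ over ordered pairs of distinct transitions: pairs of adjacent transitions (sharing a vertex $X_{m+1}$) contribute, summed over all blocks and divided by $N$, at most a constant times $\max_i\int_{A_{n,i}}\phi\to0$; pairs of non-adjacent transitions are treated by the Markov property together with $P^k(x,A)\le\int_A\phi$ for all $k\ge1$ (immediate from the upper bound in (K1)), which decouples the two transitions up to a factor $(\int_{A_{n,i}}\phi)(\int_{A_{n,j}}\phi)$ and, on these low-intensity blocks, leaves a bound of order $\eps_n\to0$. Combining the pieces gives (ii).

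I expect (ii) to be the main obstacle, and within it the uniform control of the Poisson-approximation error over all cuts $S\times T$ when $\kappa$ may be unbounded: the naive bound $\tfrac1N\sum_{i,j}|p_n(i,j)-\lambda_{ij}|$ need not vanish, and it is the threshold split that rescues the argument — the blocks on which the Poisson intensity diverges carry asymptotically no $\kappa$-mass (combining uniform integrability of the pixelations with $\alpha<1$), while on the remaining blocks the long-range dependence of the chain is killed by the one-step domination in (K1), so that no quantitative modulus of integrability for $\phi$ is required. Part (i), by contrast, should be comparatively routine once the Doeblin property is noted; the only point of care there is that the union bound over the $4^n$ cuts is affordable exactly because the regime $\alpha>0$ makes the chain long, $N\gg n$.
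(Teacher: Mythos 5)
Your proposal is correct in outline, and the second half of it takes a genuinely different route from the paper. Step (i) coincides with the paper's Lemma~\ref{r1}: the same reduction of the cut norm to a maximum over unions of blocks, the same concentration inequality of Paulin for Hamming--Lipschitz functionals of a uniformly ergodic chain, the same union bound over the exponentially many cuts defeated by $N\gg n$, and Borel--Cantelli. Where you diverge is in replacing the paper's Lemmas~\ref{r2} and~\ref{r3} by a single direct estimate. The paper interposes the ``Poissonised'' graph $H_n$ with weights $\frac{n^2}{2N}(1-\exp(-\frac{2N}{n^2}\mu_n(i,j)))$, proves $\IP[E_n(i,j)=0]\approx e^{-\IE E_n(i,j)}$ by the Stein--Chen method with an explicit size-bias coupling (whose contraction comes from the Doeblin lower bound $\delta$ in (K1) and whose error term is controlled by the upper bound $\phi$), and then needs a separate analytic lemma (Taylor bound $0\le x-(1-e^{-x})\le\min\{x,x^2\}$ plus a truncation at level $\tau$ and dominated convergence) to pass from $H_n$ to $\kappa/\norm{\kappa}_1$. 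You bypass Poisson approximation entirely: since $\IP[W_n\ge 1]\le\IE W_n$, it suffices to bound $\IE(W_n-1)_+\le\frac12\IE[W_n(W_n-1)]$, and your treatment of the second factorial moment (adjacent transitions via $\max_i\int_{A_{n,i}}\phi\to0$, non-adjacent ones via $P^k(x,A)\le\int_A\phi$) uses exactly the same ingredient --- the upper bound in (K1) --- that the paper uses inside its coupling estimate; your intensity threshold $\eps_n$ plays the role of the paper's $\tau$-truncation, with uniform integrability of the pixelations $\{\kappa_n\}$ (they are conditional expectations of the integrable $\kappa$) doing the work of the paper's appeal to dominated convergence. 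Your argument is more elementary, and you correctly identify that it is the regime $\alpha<1$ that makes it possible; the price is that it does not extend to $\alpha=1$, where the intensities $\lambda_{ij}$ stay bounded away from $0$ and the factorial-moment bound no longer beats the main term --- this is precisely the point of the paper's closing remark that Stein--Chen is indispensable for Proposition~\ref{mt2}. Two small bookkeeping points: you should fix the factor of $2$ coming from counting each undirected transition in both orders (the paper does this by weighting edges of $\IE G_n$ and $H_n$ with $\frac12\IE I_n(i,j)$; as written, your $\frac{n^2}{N}\bar G_n$ converges to $2\kappa/\norm{\kappa}_1$), and your explicit handling of the diagonal blocks is actually more careful than the paper's.
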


\section{Proof of Theorem~\ref{mt1}} \label{pmt1}

\noindent To prove our result we will need to define two (deterministic and intermediate) weighted graphs. The first graph is an ``averaged'' version of~$G_n$, which we shall denote by~$\IE G_n$; it is the weighted graph on the vertices~$[n]$ with edge weights
\be{
	\beta_{ij}(\IE G_n) = \frac{1}{2}\IE I_n(i,j).
}
Denote by~$\frac{n^2}{N}\IE G_n$ be the weighted graph obtained by scaling the weights of~$\IE G_n$ by~$\frac{n^2}{N}$ (as described in Section~\ref{sgc}).
The second graph, denoted by~$H_n$, is the weighted graph on the vertices~$[n]$ with edge weights
\ben{ \label{hn}
	\beta_{ij}(H_n) = \frac{n^2}{2N}\bbbklr{1-\exp\bbklr{-\frac{2N}{n^2}\mu_n(i,j)}},
\quad\text{where }
	\mu_n(i,j) = n^2\int_{A_{n,i}}\int_{A_{n,j}}\frac{\kappa(x,y)}{\norm{\kappa}_1}dxdy.
}
For~$x,y\in[0,1]$, let~$i_n$ and~$j_n$ be such that~$x\in A_{n,i_n}$ and
$y\in A_{n,j_n}$ for all~$n \geq 1$. Observe that by the Lebesgue
density theorem, $\kappa(x,y)/\norm{\kappa}_1 = \lim_{n\toinf}\gamma_n(i_n,j_n)~$
almost everywhere on~$[0,1]^2$.

Our strategy will be to show that, for large~$n$, $\frac{n^2}{N}G_{n}$ is close to~$\frac{n^2}{N}\IE G_{n}$, followed by the fact that~$\frac{n^2}{N}\IE G_{n}$ is close to~$H_n$, and finally that~$H_n$ is close to~$\kappa/\norm{\kappa}_1$.

We start with the first lemma, which shows that the distance between~$\frac{n^2}{N}G_{n}$ and~$\frac{n^2}{N}\IE G_{n}$ goes to~$0$ almost surely with respect to~$\IP$. The key ingredient of the proof is a concentration inequality of \cite{Paulin2015}.

\begin{lemma} \label{r1} We have
\besn{ \label{r1e}
  \lim_{n \to \infty} 
  	d_\square\bbbklr{\frac{n^2}{N}G_{n}, \frac{n^2}{N}\IE G_{n}}  = 0 
	 \qquad\text{almost surely  w.r.t.\ } \IP.
}
\end{lemma}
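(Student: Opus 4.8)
The plan is to bound $d_\square\bklr{\frac{n^2}{N}G_n,\frac{n^2}{N}\IE G_n}$ by expressing the cut-distance as a maximum over subsets $S,T\subseteq[n]$ of $\frac{n^2}{2N}\bbbabs{\sum_{i\in S,j\in T}(I_n(i,j)-\IE I_n(i,j))}$ and then showing this quantity is small with high probability, uniformly over $S$ and $T$. For fixed $S,T$, the random variable $Z_{S,T}=\sum_{i\in S,j\in T}I_n(i,j)$ is a function of the trajectory $X_1,\dots,X_N$ of the stationary Markov chain; the key observation is that changing a single coordinate $X_m$ can flip at most a bounded number of the indicators $I_n(i,j)$ (each $X_m$ participates in only the two transitions $(X_{m-1},X_m)$ and $(X_m,X_{m+1})$, so it affects at most $2n$ of the edge-indicators, hence at most $2n$ terms of the sum). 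This gives a bounded-difference (Lipschitz) property with constant of order $n$ per coordinate. Under Assumption (K1) the chain has a uniform minorisation ($\kappa(x,y)/\int\kappa(x,z)dz\geq\delta$), so it is uniformly ergodic with a mixing time bounded independently of $n$; this is exactly the setting of the McDiarmid-type concentration inequality for Markov chains in \cite{Paulin2015}, which yields a sub-Gaussian tail bound of the form $\IP\bklr{\abs{Z_{S,T}-\IE Z_{S,T}}\geq t}\leq 2\exp\bklr{-c\,t^2/(N n^2)}$ for an absolute constant $c$ depending only on $\delta$ (and $\phi$).

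Next I would take a union bound over all pairs $(S,T)$. There are at most $4^n$ such pairs, so choosing $t=t_n$ with $t_n = \eps N / n$ for fixed $\eps>0$ gives, after scaling by $\frac{n^2}{2N}$, the event $\klg{d_\square(\tfrac{n^2}{N}G_n,\tfrac{n^2}{N}\IE G_n)\geq\eps}$ a probability bounded by $4^n\cdot 2\exp\bklr{-c\,t_n^2/(Nn^2)} = 2\exp\bklr{n\log 4 - c\eps^2 N/n^4}$. Using Assumption (N1), $N\sim\lambda n^{1+\alpha}$, the exponent is $n\log 4 - c\eps^2\lambda n^{\alpha-3}(1+o(1))$ — and here is where I need to be careful, because this is summable in $n$ only if $N/n^4$ dominates $n$, i.e. $N\gg n^5$, which Assumption (N1) with $\alpha\leq 1$ does \emph{not} give. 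So the crude union bound over $4^n$ sets is too lossy; instead I would use the standard trick from sparse graphon theory (as in \cite{Borgs2014a}): bound the cut-norm by a supremum over a polynomially large net, or equivalently invoke the fact that for the cut-norm it suffices to control $\sum_{i\in S,j\in T}(\cdot)$ for $S,T$ ranging over the $O(n^2)$ sets of the form built from the ordering induced by vertex degrees, or apply a Grothendieck-type rounding so that the relevant complexity is $2^{O(n)}$ but with a matching Lipschitz improvement. More robustly, I would instead apply the concentration inequality directly to the $\pm1$-quadratic-form representation of the cut norm and use that $\sup_{S,T}$ is itself a Lipschitz function of the trajectory with the \emph{same} per-coordinate constant $O(n)$ (since perturbing one $X_m$ changes the whole supremum by at most the number of indicators it controls, $O(n)$, regardless of $S,T$); then a \emph{single} application of \cite{Paulin2015} to the scalar random variable $W_n\vcentcolon= d_\square(\frac{n^2}{N}G_n,\frac{n^2}{N}\IE G_n)$ gives $\IP\bklr{\abs{W_n-\IE W_n}\geq\eps}\leq 2\exp\bklr{-c\eps^2 N/n^2}$, and with $N\sim\lambda n^{1+\alpha}$ the exponent is $-c\eps^2\lambda n^{\alpha-1}(1+o(1))$.

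The remaining issue is that the exponent $n^{\alpha-1}$ still does not tend to $-\infty$ when $\alpha<1$; so even the sharp concentration around the mean is not by itself summable. The resolution is to combine concentration with a bound on the mean: one shows separately that $\IE W_n=\IE d_\square(\frac{n^2}{N}G_n,\frac{n^2}{N}\IE G_n)\to 0$ (again via the Lipschitz/bounded-differences structure and the fact that each edge-probability is $O(N/n^2)=O(n^{\alpha-1})$, so that typical fluctuations of $Z_{S,T}$ are of order $\sqrt{Nn^2/n\cdot n}$... which is $o(N/n)$), and then one only needs $\IP(W_n\geq\eps)\leq\IP(W_n-\IE W_n\geq\eps/2)$ for large $n$, summed with the help of a slightly more delicate deviation estimate — or one simply notes that it suffices to prove convergence along a subsequence and then upgrades to a.s.\ convergence using monotonicity-type arguments in the clumping parameter, exactly as in \cite{Borgs2014a}. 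I expect the main obstacle to be precisely this mismatch between the entropy of the set of test pairs $(S,T)$ and the strength of the Markov-chain concentration at sparsity level $\rho_n\sim n^{\alpha-1}$: making the union bound (or the chaining/net argument) fit inside the exponent requires either the sharper ``per-trajectory Lipschitz constant'' formulation above, a second-moment control on $\IE W_n$, or passing through a polynomial-size net as in the $L_p$ graphon literature; getting the bookkeeping right there, with the constant $\delta$ from Assumption (K1) controlling the mixing time, is the crux, and everything else (identifying $\beta_{ij}(\IE G_n)$, the scaling by $n^2/N$, the reduction to $\sup_{S,T}$) is routine.
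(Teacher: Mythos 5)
Your overall skeleton is exactly the paper's: write the cut-distance as a supremum over pairs $(S,T)$ of unions of bins, apply the Markov-chain McDiarmid inequality of Paulin (2015) (valid here because Assumption~(K1) gives a uniform minorisation, hence uniform ergodicity and bounded mixing time), take a union bound over the $2^{2n}$ pairs, and finish with Borel--Cantelli. The problem is a single decisive miscalculation that derails the rest: the bounded-differences constant. Changing one coordinate $X_m$ alters only the two transitions $(X_{m-1},X_m)$ and $(X_m,X_{m+1})$, and each transition witnesses exactly \emph{one} edge indicator $I_n(i,j)$ --- namely the pair of bins containing its two endpoints --- not $n$ of them. (The $2n$ you quote is the number of edges that could \emph{potentially} be touched as the neighbouring coordinates vary; for the bounded-differences inequality you fix the other coordinates, and then at most $2$ indicators can switch off and at most $2$ can switch on.) So $Z_{S,T}=\sum_{i\in S,j\in T}I_n(i,j)$ changes by $O(1)$ per coordinate, i.e.\ $f_{S,T}=\frac{1}{2N}Z_{S,T}$ is $O(1/N)$-Hamming-Lipschitz, and Paulin's inequality gives $\IP[\,|f_{S,T}-\IE f_{S,T}|>\eps\,]\leq 2\exp(-CN\eps^2)$, not $\exp(-c t^2/(Nn^2))$.

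With the correct constant there is no entropy mismatch at all: the union bound costs $2^{2n+1}$, the tail is $\exp(-CN\eps^2)$, and Assumption~(N1) gives $N\sim\lambda n^{1+\alpha}\gg n$, so $\exp(2n\log 2-CN\eps^2)$ is summable and Borel--Cantelli applies directly. Consequently the entire second half of your proposal --- the polynomial nets, Grothendieck-type rounding, separate control of $\IE W_n$, and subsequence arguments --- is solving a problem that does not exist, and none of those alternative routes is actually carried to completion, so as written the proof does not close. Fix the Lipschitz constant and your first paragraph plus the naive union bound is already the paper's proof. (Two minor bookkeeping points: the threshold for $|Z_{S,T}-\IE Z_{S,T}|$ corresponding to cut-distance $\eps$ is $2N\eps$ once the volume factors $\Vol(A_{n,i})\Vol(A_{n,j})=n^{-2}$ are included, not $\eps N/n$; and the supremum defining the cut-norm of the associated step graphons does reduce to unions of the $A_{n,i}$, so the $4^n$ count is the right one.)
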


\begin{proof}
Note that
\bes{
  &\dbox\bbbklr{\frac{n^2}{N}G_{n},\frac{n^2}{N}\IE G_{n}}\\
	&\qquad = \sup\limits_{\substack{S=\bigcup_{m=1}^k A_{n,i_m},\\T=\bigcup_{m=1}^l A_{n,j_m}}
} \bbbabs{\sum\limits_{i:i/n\in S}\sum\limits_{j:j/n\in T}\frac{n^2}{2N}\bklr{I_n(i,j)-\IE I_n(i,j)}\Vol(A_{n,i})\Vol(A_{n,j})} \\
	&\qquad =  \sup\limits_{\substack{S=\bigcup_{m=1}^k A_{n,i_m},\\T=\bigcup_{m=1}^l A_{n,j_m}}
} \bbbabs{f_{S,T}(X)-\IE f_{S,T}(X)},
}
where $f_{S,T}(X) = f_{S,T}(X_0,\dots,X_N) = \frac{1}{2N}\sum_{i:i/n\in S}\sum_{j:j/n\in T}I_n(i,j)$. As~$X$ is Harris recurrent by Assumption (K1), we obtain from \cite[Theorem 16.0.2]{Meyn2009} that the Markov chain has finite mixing time $t_{mix}$.  Let~$\eps >0$  be given. Now, changing one point in~$f(X)$ will change~$f$ by at most 2 edges; that is, $f$ is~$1/N$-Hamming-Lipschitz. Therefore, by \cite[Corollary~2.10]{Paulin2015},
\be{
	\IP\bkle{ \abs{f_{S,T}(X)-\IE f_{S,T}(X)} > \eps } \leq 2\exp\bklr{- NC\eps^2},
}	
where~$C$ is a constant that only depends on~$t_{mix}$.   Using the union bound,
\bes{
	  \IP\bkle{ \sup_{\substack{S=\bigcup_{m=1}^k A_{n,i_m},\\T=\bigcup_{m=1}^l A_{n,j_m}}}\abs{f_{S,T}(X)-\IE f_{S,T}(X)} > \eps } 
	  		& \leq 2^{2n+1}\exp\bklr{- NC\eps^2} \\
		& = \exp\bklr{- NC\eps^2 + (2n+1)\log 2}.
} 
By \eq{nNlim} and Borel-Cantelli, the claim follows.
\end{proof}

Our second lemma shows that the distance between~$\frac{n^2}{N}\IE
G_{n}$ and~$H_n$ goes to~$0$. The key ingredient of the proof is an
application of the Stein-Chen method.

\begin{lemma}\label{r2} We have
\besn{\label{r2eq} 
	\lim_{n \to \infty}d_\square\bbbklr{\frac{n^2}{N}\IE G_{n},H_n}=0.
}
\end{lemma}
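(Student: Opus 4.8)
The plan is to estimate $d_\square$ between the two weighted graphs directly from the definition, so it suffices to bound $\bigl|\frac{n^2}{N}\beta_{ij}(\IE G_n)-\beta_{ij}(H_n)\bigr|$ uniformly over edges, then sum against $\Vol(A_{n,i})\Vol(A_{n,j})$. Recall $\beta_{ij}(\IE G_n)=\tfrac12\IP[I_n(i,j)=1]$, and $I_n(i,j)$ is the indicator that at least one of the $N$ (or $N-1$) consecutive pairs $(X_m,X_{m+1})$ realises the transition between bins $A_{n,i}$ and $A_{n,j}$ in either direction. The first step is therefore to compare the probability that this indicator vanishes with a Poisson survival function. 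Let $W_{ij}=\sum_{m=0}^{N-1}\bigl(\mathbbm{1}[X_m\in A_{n,i},X_{m+1}\in A_{n,j}]+\mathbbm{1}[X_m\in A_{n,j},X_{m+1}\in A_{n,i}]\bigr)$, so that $I_n(i,j)=\mathbbm{1}[W_{ij}\geq 1]$ and, by \eqref{mcjoint}, $\IE W_{ij}=\tfrac{2N}{n^2}\mu_n(i,j)+O(1/n^2)$ (the $O(1/n^2)$ accounts for the one missing pair). The target quantity $\beta_{ij}(H_n)=\tfrac{n^2}{2N}(1-e^{-\IE W_{ij}})$ up to negligible terms is exactly $\tfrac{n^2}{2N}\IP[\mathrm{Poisson}(\IE W_{ij})\geq 1]$, so what must be shown is that $\IP[W_{ij}\geq 1]$ is close to $\IP[\mathrm{Poisson}(\IE W_{ij})\geq 1]$.

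The second step is the Stein--Chen bound. The summands of $W_{ij}$ have a local dependence structure — the term indexed by $m$ depends only on $(X_m,X_{m+1})$, which, for a Markov chain with finite mixing time, is weakly dependent on terms indexed by $m'$ with $|m-m'|$ large — so the Stein--Chen method for Poisson approximation under local dependence (e.g.\ the coupling or the dependency-neighbourhood version) gives
\be{
	d_{\mathrm{TV}}\bigl(\law(W_{ij}),\mathrm{Poisson}(\IE W_{ij})\bigr)\leq C\Bigl(\IE W_{ij}\cdot\frac{1}{n^2}+ (\IE W_{ij})^2\cdot\frac{t_{\mathrm{mix}}}{N}\cdot\frac{1}{?}\Bigr),
}
the precise form depending on the chosen neighbourhood size; the key point is that each individual summand has probability $O(1/n^2)$ and a neighbourhood of size $O(t_{\mathrm{mix}})$ suffices to decouple, by Assumption~(K1), so the bound is of the form $C\,t_{\mathrm{mix}}\,(\IE W_{ij})^2/N + C\,\IE W_{ij}/n^2$. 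In particular $\bigl|\beta_{ij}(\IE G_n)\tfrac{n^2}{N}-\beta_{ij}(H_n)\bigr|\leq \tfrac{n^2}{2N}d_{\mathrm{TV}}(\dots)+o(\cdot)$, which after using $\IE W_{ij}\leq \tfrac{2N}{n^2}\phi(\cdot)\cdot(\text{something bounded})$ from Assumption~(K1) becomes $O\bigl(\tfrac{1}{n^2}\cdot\text{bounded}\bigr)$ per edge.

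The third step is to sum: $d_\square\leq \sum_{i,j}\bigl|\tfrac{n^2}{N}\beta_{ij}(\IE G_n)-\beta_{ij}(H_n)\bigr|\Vol(A_{n,i})\Vol(A_{n,j})$, and since there are $n^2$ pairs each contributing $O(1/n^2)$ times the cell areas $1/n^2$, together with the integrability of $\phi$ (so that $\sum_j \phi$-type weights stay $O(n)$), the total is $O(1/n)\to 0$; one has to be a little careful that the $\mu_n(i,j)$ can be large for a few $(i,j)$, but there the crude bound $\beta_{ij}(H_n)\leq \tfrac{n^2}{2N}$ and $\beta_{ij}(\IE G_n)\leq \tfrac12$ already gives a contribution $\leq \tfrac{n^2}{N}$ per such edge, and such edges carry total $\pi$-mass $o(1)$ by dominated convergence. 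I expect the main obstacle to be making the local-dependence Poisson approximation fully rigorous for the Markov chain: one must carefully choose the dependency neighbourhood (a window of width a multiple of $t_{\mathrm{mix}}$ around each index $m$), control the residual dependence of $(X_m,X_{m+1})$ on the chain outside that window using the geometric ergodicity implied by Assumption~(K1), and track that the resulting error, summed over all $\Theta(n^2)$ edges, still vanishes — this is where the hypothesis $\alpha<1$ (hence $N=o(n^2)$, so $n^2/N\to\infty$ but slowly relative to $N$) is used to keep $\tfrac{n^2}{N}\cdot\tfrac{(\IE W_{ij})^2}{N}$-type terms under control.
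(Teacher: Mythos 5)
Your opening reduction is exactly the paper's: bound $d_\square$ by the $L_1$-distance, recognise $\tfrac{n^2}{N}\beta_{ij}(\IE G_n)=\tfrac{n^2}{2N}\IP[E_n(i,j)>0]$ and $\beta_{ij}(H_n)$ as $\tfrac{n^2}{2N}$ times a Poisson survival probability with the matching mean, and so reduce the lemma to a Poisson approximation for the number of $i\leftrightarrow j$ transitions. But that Poisson approximation \emph{is} the lemma, and you have not supplied it: your displayed Stein--Chen bound contains a literal ``$1/?$'', and you explicitly defer ``the main obstacle'' (making the argument rigorous for the Markov chain) rather than resolving it. The difficulty is real: the indicators $I_m$ are not locally dependent --- every pair is correlated --- so the dependency-neighbourhood version of Stein--Chen does not apply off the shelf. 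If you truncate to a window of width $w_n$ around each index, the residual dependence outside the window contributes roughly $n^2(1-\delta)^{w_n}$ after summing over all pairs $(i,j)$, forcing $w_n\gtrsim\log n$; the neighbourhood terms then pick up a factor $w_n$, and a quantity like $\log n\cdot\max_j\int_{A_{n,j}}\phi(y)\,dy$ need not vanish when $\phi$ is merely integrable, which is all Assumption (K1) gives. The paper avoids this entirely by using the \emph{size-bias coupling} form of Stein--Chen: condition on $I_M=1$ for a uniform index $M$, couple the conditioned chain to the original one using the uniform minorisation $\IP[X_{m+1}\in dy\mid X_m=x]\geq\delta\,dy$ from (K1), so that the coupling time in each time direction is dominated by a Geometric($\delta$) variable. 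The geometric weights $(1-\delta)^{m-1}$ sum to $1/\delta$, a constant, and the resulting per-edge bound is \emph{linear} in $p(i,j)=\IE I_m$ plus $\delta^{-1}\int_{A_{n,i}}\phi+\delta^{-1}\int_{A_{n,j}}\phi$, which sums over $i,j$ to $O(n/N)\to0$.

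Two further points. First, your conjectured bound carries a $(\IE W_{ij})^2$ term; after multiplying by $n^2/(2N)$ and summing against the cell volumes this becomes an $L_2$-type quantity $\sum_{i,j}\mu_n(i,j)^2/n^4$, which tends to zero only by uniform integrability and without any rate under the paper's hypotheses --- so your heuristic ``$O(1/n^2)$ per edge, hence $O(1/n)$ total'' accounting does not go through, and the argument must instead be organised so that everything is controlled by $\sum_{i,j}p(i,j)=O(1)$ and $\sum_j\int_{A_{n,j}}\phi=\norm{\phi}_1$, as in the paper. Second, the hypothesis $\alpha<1$ is not what saves this lemma: the $N$'s in your term $\tfrac{n^2}{N}\cdot(\IE W_{ij})^2/N$ cancel, and the paper notes in the Discussion that Lemma~\ref{r2} holds for $\alpha=1$ as well --- all that is used here is $N/n\to\infty$, i.e.\ $\alpha>0$. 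The sparsity condition $\alpha<1$ enters only in Lemma~\ref{r3}, where $2N/n^2\to0$ is needed for the Taylor comparison of $1-e^{-x}$ with $x$.
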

\begin{proof}Let
\be{
 E_n(i,j) = \sum_{m=1}^{N} I_{m},
}
where~$I_m = \I[(X_{m-1},X_m) \in \{A_{n,ij},A_{n,ji}\}]$ with~$A_{n,ij}=A_{n,i}\times A_{n,j}$, and note that
\be{
	\IE E_n(i,j) = 2N\int_{A_{n,i}}\int_{A_{n,j}}\frac{\kappa(x,y)}{\norm{\kappa}_1}dxdy = \frac{2N}{n^2}\mu_n(i,j).
}
Clearly, $I_n(i,j)=\I[E_n(i,j)>0]$. 
Now,
\besn{\label{tsc}
  &d_\square\bbklr{\frac{n^2}{N}\IE G_{n},H_n}\leq d_1\bbklr{\frac{n^2}{N}\IE G_{n},H_n}\\
	&\qquad = \sum_{i=1}^n \sum_{j=1}^n\frac{n^2}{2N}\Vol(A_{n,i})\Vol(A_{n,j})\babs{\IE I_n(i,j) - \bklr{1-\exp\klr{-\IE E_n(i,j)}}} \\
	&\qquad = \frac{1}{2N} \sum_{i=1}^n \sum_{j=1}^n\babs{\IE I_n(i,j) - (1-\exp\bklr{-\IE E_n(i,j)})} \\
	&\qquad = \frac{1}{2N} \sum_{i=1}^n \sum_{j=1}^n \abs{\IP[E_n(i,j)=0]-\IP[Z_n(i,j)=0]},
}
where~$Z_n(i,j)\eqlaw \Po\bklr{\frac{2N}{n^2}\mu_n(i,j)}$.
Now, let~$E_n^s(i,j)$ be a random variable having the size-bias distribution of~$E_n(i,j)$. Then, the Stein-Chen method (see, for example, \cite[Theorem~1.B]{Barbour1992}) yields 
\be{
	\dtv\bklr{\law(E_n(i,j)),\Po\bklr{{\textstyle\frac{2N}{n^2}}\mu_n(i,j)}} 
	\leq 
	\IE\babs{E_n(i,j)-(E^s_n(i,j)-1)},
}
where~$\dtv$ denotes the total variation distance.
Note that~$\IE I_m = p(i,j)$ for all~$0\leq m\leq n$, hence~$\IE I_m = \IE I_{m'}$ for all~$1\leq m,m'\leq n$. Thus, we can use the standard way to construct the size-bias distribution (see for example \cite{Goldstein1996}). To this end, let~$M$ be a uniformly chosen index from~$1$ to~$N$, independent of all else. It is not
difficult to show that~$\law(E_n(i,j)|I_M=1)$ is the size-bias distribution
of~$E_n(i,j)$. We now construct~$E_n^s(i,j)$ on the same probability space as~$E_n(i,j)$ in the following way. Consider~$M$ as given, and consider a process~$X'=(X'_0,\dots,X'_N)$ with law
\be{ 
	\law(X') =\law\klr{X\given I_M=1}=
  \law\bklr{X\given(X_{M-1},X_M)\in A_{n,ij}\cup A_{n,ji}}
  .
}
Let~$I'_m = \I[(X'_{m-1},X'_m) \in A_{n,ij}\cup A_{n,ji}]$, and observe that
\be{
	\law((I'_m)_{0\leq m\leq n}) = \law((I_m)_{0\leq m\leq n}\given I_M=1).
}
Thus, 
\be{ 
	E_n^s(i,j) = \sum_{m=1}^{N} I'_{m}
}
has the size-bias distribution of~$E_n(i,j)$.
If~$I_M = 1$, we can couple the two processes~$X$ and~$X'$ perfectly. If~$I_M=0$, we couple the two processes as follows. Condition~\eq{conditiononkernel} implies that~$X$ is Harris-recurrent; that is,
\be{ 
	\IP[X_{m+1}\in dy|X_m=x] \geq \delta dy.
}  
Thus, it is possible to couple~$X$ and~$X'$ such that
\bm{ 
	\IP[X'_{M+k}\neq X_{M+k}|I_M=0] \\ 
	\qquad\shoveleft{=\int_0^1\int_0^1\IP[X'_{M+k}\neq
      X_{M+k}\given X_{M-1}=x,X_M=y]}\\
      \shoveright{\times\IP[X_{M-1} \in dx,X_M \in dy|I_M=0]} \\
   \qquad\shoveleft{\leq (1-\delta)^k,\hfil} 
} 
and, similarly,
\be{ 
	\IP[X'_{M-1-k}\neq X_{M-1-k}|I_M=0] 
	\leq (1-\delta)^k.
}
We can easily extend the processes~$X_m$ and~$X_m'$ so that~$I_m$ and~$I'_m$ are defined for all~$m\in\IZ$. Now, let~$G_1$ and~$G_2$ be geometric random variables with success probability~$\delta$ dominating the coupling time forward and backward in time from~$M$ and~$M-1$ respectively. Note that we can construct~$G_1$ and~$G_2$ such that~$(G_1,G_2)\indep X$ and~$(G_1,G_2)\indep X'$ (note, however that
$(G_1,G_2)\not\indep (X,X')$). Then, 
\bmn{ \label{isb}
  \IE\babs{E_n(i,j)-(E_n^s(i,j)-1)} \\
   \qquad\shoveleft 
   \leq \IE I_M + \IE\bbbklg{(1-I_M)\sum_{m\neq M} \abs{I_m-I'_m}}\\ 
  \qquad\shoveleft
  \leq \IE I_M + \IE \sum_{m=1}^{G_1} (I_{M-m}+I'_{M-m})+\IE
  \sum_{m=1}^{G_2} (I_{M+m}+I'_{M+m}) \\
  \qquad\shoveleft \leq \IE I_M
  + \IE \sum_{m=1}^{\infty} \I[G_1\geq
    m](I_{M-m}+I'_{M-m})+\IE \sum_{m=1}^{\infty}
    \I[G_2\geq m] (I_{M+m}+I'_{M+m})  \\ \qquad\shoveleft \leq p(i,j)
  + \frac{2p(i,j)}{\delta} + \sum_{m=1}^\infty \IP[I_{M-m}=1|I_M=1]
  (1-\delta)^{m-1} \\ \shoveright{+ \sum_{m=1}^\infty
    \IP[I_{M+m}=1|I_M=1] (1-\delta)^{m-1}}\\ \qquad\shoveleft \leq
  p(i,j)\bbklr{1 + \frac{2}{\delta}} + \sum_{m=1}^\infty
  \bklr{\IP[I_{-m}=1|I_0=1]+\IP[I_{m}=1|I_0=1]} (1-\delta)^{m-1}\hfil
}
Now,
\bes{
  \IP[I_{m}=1|I_0=1] 
  &= \IP[(X_{m-1},X_m)\in A_{n,ij}\cup
    A_{n,ji}|I_0=1] \\ 
   &\leq \IP[X_m\in A_{n,i}\cup A_{n,j}|I_0=1] \\
   & =
  \int_{x\in[0,1]}\int_{y\in A_{n,i}\cup
    A_{n,j}}\frac{\kappa(x,y)dy}{\int_0^1\kappa(z,x)dz}\IP[X_{m-1}\in dx|I_0=1]\\
    &\leq
  \int_{x\in[0,1]}\int_{y\in A_{n,i}\cup A_{n,j}}\phi(y)dy\IP[X_{m-1}\in
    dx|I_0=1]\\ 
    &= \int_{y\in A_{n,i}\cup A_{n,j}}\phi(y)dy
    \leq
  \int_{y\in A_{n,i}}\phi(y)dy+\int_{y\in A_{n,j}}\phi(y)dy.
}
Applying this bound to~\eq{isb}, we have, for each~$i$ and $j$,
\be{	\IE\babs{E_n(i,j)-(E_n^s(i,j)-1)}\leq
	\bbklr{1+\frac{2}{\delta}}p(i,j)+\frac{1}{\delta}\int_{y\in A_{n,i}}\phi(y)dy+\frac{1}{\delta}\int_{y\in A_{n,j}}\phi(y)dy.
}
In conjunction with~\eq{tsc} and interchanging summation with integration, we arrive at
\bes{
	&d_\square\bbbklr{\frac{n^2}{N}\IE G_{n},H_n} \\
	&\qquad \leq \frac{1}{2N} \sum_{i=1}^n \sum_{j=1}^n \bbbklr{\bbklr{1+\frac{2}{\delta}}p(i,j)+\frac{1}{\delta}\int_{y\in A_{n,i}}\phi(y)dy+\frac{1}{\delta}\int_{y\in A_{n,j}}\phi(y)dy} \\
	&\qquad = \frac{(1+2\delta)n}{2N} + \frac{n}{\delta N}\int_{0}^1 \phi(x)dx.
}
Using \eq{nNlim}, the claim follows. 
\end{proof}

Our third lemma shows that the distance between~$H_n$ and~$\kappa/\norm{\kappa}_1$ goes to~$0$. The proof is a basic exercise in real analysis. 

\begin{lemma} \label{r3} We have
\besn{	
	\lim_{n \to \infty}  d_\square\bbbklr{H_n, \frac{\kappa}{\norm{\kappa}_1}} = 0.
}
\end{lemma}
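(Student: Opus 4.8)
The plan is to show that the weighted graph $H_n$, whose edge weights are a smooth ``truncated'' version of the block-averages $\mu_n(i,j)$ of $\kappa/\norm{\kappa}_1$, has cut-distance to $\kappa/\norm{\kappa}_1$ tending to zero. Since $d_\square \leq d_1$, it suffices to control the $L_1$-distance between $\kappa_{H_n}$ and $\kappa/\norm{\kappa}_1$. Recall that $\beta_{ij}(H_n) = \frac{n^2}{2N}(1-\exp(-\frac{2N}{n^2}\mu_n(i,j)))$ where $\mu_n(i,j) = n^2\int_{A_{n,i}}\int_{A_{n,j}}\frac{\kappa(x,y)}{\norm{\kappa}_1}\,dx\,dy$; writing $t_n = \frac{2N}{n^2}\to\infty$ (by Assumption (N1) with $\alpha>0$), we have $\beta_{ij}(H_n) = \frac{1}{t_n}(1-e^{-t_n \mu_n(i,j)})$.

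First I would split the difference using the triangle inequality through the intermediate graphon $\bar\kappa_n$ that is constant equal to $\mu_n(i_n,j_n) = n^2\int_{A_{n,i_n}\times A_{n,j_n}} \frac{\kappa}{\norm{\kappa}_1}$ on each block $A_{n,i}\times A_{n,j}$ (i.e. the conditional-expectation/block-average of $\kappa/\norm{\kappa}_1$). The second piece, $d_1(\bar\kappa_n, \kappa/\norm{\kappa}_1)$, tends to zero because $\bar\kappa_n$ is the martingale/conditional-expectation approximation of the integrable function $\kappa/\norm{\kappa}_1$ along the refining dyadic-type partition into $n^2$ squares; this is the standard $L_1$-convergence of conditional expectations (equivalently, $L_1$-density of step functions), and the excerpt already notes the a.e.\ convergence via the Lebesgue density theorem. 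For the first piece, on the block $A_{n,i}\times A_{n,j}$ the integrand $|\kappa_{H_n} - \bar\kappa_n|$ equals $|\frac{1}{t_n}(1-e^{-t_n\mu_n(i,j)}) - \mu_n(i,j)|$, and using the elementary inequality $0 \leq s - \frac{1}{t}(1-e^{-ts}) \leq \min\{s,\, \tfrac{1}{2}t s^2\} \leq s\min\{1, ts\}$ for $s,t\geq 0$, this is bounded by $\mu_n(i,j)\min\{1, t_n\mu_n(i,j)\}$. Summing over all blocks with the $\frac{1}{n^2}$ volume weights gives
\be{
d_1\bbklr{\kappa_{H_n}, \bar\kappa_n} \leq \frac{1}{n^2}\sum_{i,j} \mu_n(i,j)\min\{1, t_n\mu_n(i,j)\}.
}

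The main obstacle is showing that this last sum vanishes as $n\to\infty$, since $t_n\to\infty$. I would argue by dominated convergence on $[0,1]^2$: the sum equals $\int_{[0,1]^2} \bar\kappa_n(x,y)\min\{1, t_n\bar\kappa_n(x,y)\}\,dx\,dy$; the integrand is dominated by $\bar\kappa_n$, which converges in $L_1$ (hence, along a subsequence, is dominated by an integrable function and converges a.e.) to $\kappa/\norm{\kappa}_1$; and for a.e.\ $(x,y)$, since $\bar\kappa_n(x,y)\to \kappa(x,y)/\norm{\kappa}_1 < \infty$, we have $t_n\bar\kappa_n(x,y)\to\infty$ only where $\kappa(x,y)>0$, but there $\bar\kappa_n(x,y)$ is bounded away from $0$ so $\min\{1,t_n\bar\kappa_n(x,y)\}\to 1$ — wait, that gives the integrand $\to \kappa/\norm{\kappa}_1$, not $0$. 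The correct observation is that $\min\{1, t_n\mu_n(i,j)\} \leq t_n\mu_n(i,j) = \frac{2N}{n^2}\mu_n(i,j)$, so alternatively $\frac{1}{n^2}\sum_{i,j}\mu_n(i,j)\min\{1,t_n\mu_n(i,j)\} \leq \frac{1}{n^2}\sum_{i,j}\min\{\mu_n(i,j),\, \tfrac{1}{2}t_n\mu_n(i,j)^2\}$, and one uses $\mu_n(i,j) \leq n^2\int_{A_{n,i}}\phi(y)\,dy\cdot\frac{1}{n}\cdot\frac{1}{\norm{\kappa}_1}\cdot(\text{something bounded})$ — more precisely Assumption (K1) gives a pointwise bound forcing $\mu_n(i,j) = O(\int_{A_{n,i}}\phi\,dy + \int_{A_{n,j}}\phi\,dy)\cdot n$, which is not uniformly small. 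Rather than press this, I would instead bound directly: $\frac{1}{n^2}\sum_{i,j}\mu_n(i,j)\min\{1,t_n\mu_n(i,j)\} = \int_{[0,1]^2}\bar\kappa_n\min\{1, t_n\bar\kappa_n\}$, split the domain into $\{t_n\bar\kappa_n \leq \eta\}$ and its complement for fixed small $\eta>0$: on the first the integrand is $\leq \eta$, contributing $\leq\eta$; on the second, $\bar\kappa_n \geq \eta/t_n \to 0$, and by Markov's inequality $\Vol\{\bar\kappa_n > \eta/t_n\} \leq \frac{t_n}{\eta}\int\bar\kappa_n = \frac{t_n}{\eta\norm{\kappa}_1}\cdot\norm{\kappa}_1 = t_n/\eta$, which is useless. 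The clean route, which I will take, is: the quantity is $\int \bar\kappa_n \cdot \I[\,\cdot\,] + (\text{small})$; since $\{\bar\kappa_n\}$ is uniformly integrable (being $L_1$-convergent), and $\Vol(\{t_n\bar\kappa_n > \eta\}\cap\{\bar\kappa_n > \epsilon\}) \to 0$ for any fixed $\epsilon$ once $t_n\epsilon > \eta$ fails only on a set shrinking with... In the interest of a clean writeup I would simply invoke: $\beta_{ij}(H_n) \to \kappa(x,y)/\norm{\kappa}_1$ a.e., $\beta_{ij}(H_n) \leq \mu_n(i,j)$ pointwise with $\{\mu_n\}$ uniformly integrable (it is a conditional expectation of a fixed $L_1$ function), hence $\kappa_{H_n}\to\kappa/\norm{\kappa}_1$ in $L_1$ by the generalized dominated convergence theorem (Vitali), and therefore in $d_\square$. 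This reduces the whole lemma to the a.e.\ convergence $\beta_{ij_n}(H_n)\to\kappa/\norm{\kappa}_1$, which follows from $\mu_n(i_n,j_n)\to\kappa(x,y)/\norm{\kappa}_1$ (Lebesgue density theorem, already stated) combined with the elementary fact that $\frac{1}{t}(1-e^{-ts})\to s$ as $t\to\infty$ for each fixed $s\geq 0$ — and uniform integrability of $\{\kappa_{H_n}\}$ inherited from that of $\{\bar\kappa_n\}$.
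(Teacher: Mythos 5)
There is a genuine error at the very start of your argument, and it undermines everything that follows: you assert that $t_n = 2N/n^2 \to \infty$ ``by Assumption (N1) with $\alpha>0$''. Under Assumption (N1) we have $N \sim \lambda n^{1+\alpha}$, hence $2N/n^2 \sim 2\lambda n^{\alpha-1} \to 0$, because Theorem~\ref{mt1} assumes $0<\alpha<1$ (the paper even remarks that $N(n)=o(n^2)$ is what makes the graph sparse). The distinction is not cosmetic. If $t_n\to\infty$ were true, then $\beta_{ij}(H_n) = t_n^{-1}(1-e^{-t_n\mu_n(i,j)}) \leq t_n^{-1}\to 0$, so $H_n$ would converge to the zero graphon and the lemma would be false for any $\kappa\not\equiv 0$. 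The lemma holds precisely because $t_n\to 0$, so that $t_n^{-1}(1-e^{-t_n s})$ is a first-order Taylor perturbation of $s$, with $\abs{t_n^{-1}(1-e^{-t_n s}) - s}\leq t_n s^2/2$. Your visible mid-proof struggles (the several abandoned attempts to make the error term vanish ``since $t_n\to\infty$'') are a direct symptom of this sign error, and the ``elementary fact'' your final argument rests on --- that $t^{-1}(1-e^{-ts})\to s$ as $t\to\infty$ --- is false; the correct statement requires $t\to 0^{+}$.

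Setting that aside, the skeleton you converge on is essentially the paper's and would be salvageable once the limit of $t_n$ is corrected. The paper also uses $d_\square\leq d_1$, the triangle inequality through the block average $\hat{\kappa}_n$ of $\kappa/\norm{\kappa}_1$ (with $\norm{\hat{\kappa}_n-\kappa/\norm{\kappa}_1}_1\to 0$ by the standard $L_1$-approximation result), and the bound $\abs{g_n-\hat{\kappa}_n}\leq \min\klg{1,t_n\hat{\kappa}_n}\hat{\kappa}_n$ for the graphon $g_n$ associated with $H_n$. Where you propose Vitali's theorem (a.e.\ convergence of $g_n$ plus uniform integrability inherited from the conditional expectations $\hat{\kappa}_n$, which dominate $g_n$), the paper instead truncates $\kappa/\norm{\kappa}_1$ at a level $\tau$, uses the contraction property of block averaging to control the truncation error, and applies dominated convergence to the bounded truncated part; in both routes the pointwise convergence $\min\klg{1,t_n\hat{\kappa}_n}\to 0$ a.e.\ is exactly where $t_n\to 0$ enters. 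So your final plan is a legitimate (and arguably cleaner) alternative in outline, but as written the proof is not correct.
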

\begin{proof}  To simplify writing, we introduce the notation~$\-\kappa \vcentcolon= \kappa/\norm{\kappa}_1$. 
Recall that~$H_n$ is the weighted graph on the vertices~$[n]$ with edge weights as in \eq{hn}. Define the graphon~$\^\kappa_n$ by 
\ben{ \label{100}
	\^\kappa_n(x,y) = n^2\int_{A_{n,i}}\int_{A_{n,j}} \-\kappa(u,v)dudv
	\qquad\text{for~$x,y\in A_{n,i} \times A_{n,j}$. }
}
Let $g_n$ be the graphon associated with the graph~$H_n$, which is given by 
\be{
	g_n(x,y)=  \frac{n^2}{2N}\bbklr{1-\exp\bbklr{-\frac{2N}{n^2} \^\kappa_n(x,y)}}.
}
Now, 
\ben{\label{fsr3}
	d_\square\bklr{H_n,\-\kappa} 
	\,\leq\, d_1\bklr{H_n,\-\kappa}
	 \,=\, \norm{g_n - \-\kappa}_1 
	 \,\leq\, \norm{g_n - \^\kappa_n}_1 + \norm{\^\kappa_n - \-\kappa}_1.
}
By \cite[Lemma~5.6]{Borgs2014b}, 
\ben{\label{fa0}
	\norm{\^\kappa_n-\-{\kappa}}_1\to 0.
}
Note that, by Taylor's approximation, $0\leq x - (1-e^{-x}) \leq \min\klg{x,x^2}$ for~$x>0$. Hence, we have for any~$x,y \in \IR$ that
\bes{
	\abs{g_n(x,y) -\^\kappa_n(x,y)} 
	& =  \frac{n^2}{2N} \bbabs{\bbklr{1-\exp\bbklr{-\frac{2N}{n^2}\^\kappa_n(x,y)}}-\frac{2N}{n^2}\^\kappa_n(x,y)}\\
&\leq \frac{n^2}{2N}\min\bbbklg{\frac{2N}{n^2}\^\kappa_n(x,y),\bbklr{\frac{2N}{n^2}\^\kappa_n(x,y)}^2}\\
  &= \min\bbbklg{1,\frac{2N}{n^2}\^\kappa_n(x,y)}\^\kappa_n(x,y).
} 
Let~$\tau >0$ (to be chosen later). For any graphon~$h$, let~$h\wedge\tau$ be the graphon defined as $(h\wedge \tau)(x,y) \vcentcolon= h(x,y)\wedge \tau$ and let the graphon $(h)_n$ be defined analogously to~\eq{100}.  Now, 
\besn{\label{sgaf1}
\norm{g_n-\^\kappa_n}_1 & \leq \int_0^1\int_0^1 \min\bbbklg{1,\frac{2N}{n^2}\^\kappa_n(x,y)}\^\kappa_n(x,y)dxdy \\
& \leq  \int_0^1\int_0^1 \min\bbbklg{1,\frac{2N}{n^2}\^\kappa_n(x,y)}(\^\kappa\wedge\tau)_n(x,y)dxdy + \norm{\^\kappa_n - (\^\kappa\wedge \tau)_n}_1.
}
By the contraction property,
\ben{\label{cpro}
	\norm{\^\kappa_n - (\^\kappa\wedge\tau)_n}_1 
	\,=\, \norm{(\^\kappa - \^\kappa\wedge\tau)_n}_1 
	\,\leq\, \norm{\-\kappa - \-\kappa\wedge\tau}_1.
}
Let~$\eps >0$. Then there exists~$\tau >0$ such that 
\ben{\label{kma1}
	\norm{\-\kappa - \-\kappa\wedge\tau}_1 <  \eps.
} 
For this choice of~$\tau$, as~$\min\bklg{1,\frac{2N}{n^2}\^\kappa_n(x,y)}(\^\kappa\wedge\tau)_n(x,y)$ converges to zero pointwise and is bounded by~$\tau$, we can use dominated convergence to conclude that there exists~$n_0 >0$ such that 
\besn{\label{mg0}
  \int_0^1\int_0^1 \min\bbbklg{1,\frac{2N}{n^2}\^\kappa_n(x,y)}(\^\kappa\wedge \tau)_n(x,y)dxdy & < \eps
}
for all~$n \geq n_0$. Therefore, applying \eq{cpro}--\eq{mg0} to \eq{sgaf1}, we have that 
\be{
	\norm{g_n - \^\kappa_n} < 2 \eps
	\qquad
	\text{for all $n\geq n_0$.}
}
As~$\eps >0$ was arbitrary, we conclude that
\ben{\label{ga0}
	\norm{g_n-\^\kappa_n}_1 \to 0.
} 
From \eq{fsr3}, \eq{fa0}, and \eq{ga0}  the claim now follows.
\end{proof}

We are now ready to prove the main result. It follows immediately from
the triangle inequality and the above three lemmas.

\begin{proof}[Proof of Theorem~\ref{mt1}] As indicated above using the triangle inequality, we have 
\be{
	d_\square\bbbklr{\frac{n^2}{N}G_{n},\frac{\kappa}{\norm{\kappa}_1}} 
	\leq 
	d_\square\bbbklr{\frac{n^2}{N}G_{n},\frac{n^2}{N}\IE G_{n}}
	+ d_\square\bbbklr{\frac{n^2}{N}\IE G_{n},H_n} 
	+ d_\square\bbbklr{H_n,\frac{\kappa}{\norm{\kappa}_1}}.
}
Application of  Lemma~\ref{r1}, Lemma~\ref{r2} and Lemma~\ref{r3} completes the proof.
\end{proof}

\section{Discussion}
We conclude this note, with some remarks on dense graph sequences and Respondent Driven Sampling.

\paragraph{Dense Graph Sequence.} We have chosen~$0 < \alpha < 1$ so as to ensure that the graph sequence was sparse. If~$\alpha =1$, then we obtain a dense graph sequence. In this case as well, the convergence in the cut-metric would hold but to a ``Poissonised''~$\kappa$ in the following sense. 

\begin{proposition} \label{mt2} Under Assumption (K1) and Assumption (N1), and if~$\alpha=1$,
\be{
 \lim_{n\to \infty}d_\square\bbbklr{\frac{n^2}{N}G_{n} ,\^\kappa} = 0
}  
almost surely with respect to~$\IP$, where the graphon $\^\kappa$ is given by 
\be{
	\^\kappa(x,y) = \lambda^{-1}\bklr{1-e^{-\lambda\-\kappa(x,y)}}.
}
\end{proposition}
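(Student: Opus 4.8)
The plan is to re-use the three-step comparison underlying the proof of Theorem~\ref{mt1} --- comparing $\frac{n^2}{N}G_n$ first with $\frac{n^2}{N}\IE G_n$, then with $H_n$, and then with the target graphon --- and to observe that only the last step genuinely changes when $\alpha=1$. Lemma~\ref{r1} and Lemma~\ref{r2} were established under Assumption (K1) and Assumption (N1) without ever invoking $\alpha<1$. In Lemma~\ref{r1} the parameter~$N$ enters only through the requirement that $\exp\bklr{-NC\eps^2+(2n+1)\log 2}$ be summable in~$n$; when $\alpha=1$ we have $N\sim\lambda n^2$ by~\eqref{nNlim}, so this exponent behaves like $-\lambda C\eps^2 n^2+O(n)$ and summability --- hence the Borel--Cantelli conclusion --- holds a fortiori. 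In Lemma~\ref{r2} the proof bounds $d_\square\bklr{\frac{n^2}{N}\IE G_n,H_n}$ by $\frac{(1+2\delta)n}{2N}+\frac{n}{\delta N}\int_0^1\phi(x)\,dx$, which is $O(n/N)=O(1/n)\to0$ when $N\sim\lambda n^2$. Thus both conclusions carry over unchanged: $d_\square\bklr{\frac{n^2}{N}G_n,\frac{n^2}{N}\IE G_n}\to0$ almost surely w.r.t.\ $\IP$, and $d_\square\bklr{\frac{n^2}{N}\IE G_n,H_n}\to0$.

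The one genuinely new ingredient is the analogue of Lemma~\ref{r3}, namely $d_\square(H_n,\^\kappa)\to0$. Following the proof of Lemma~\ref{r3}, let $\^\kappa_n$ be the block approximation of $\-\kappa=\kappa/\norm{\kappa}_1$ from~\eqref{100}, and let $g_n$ be the graphon associated with $H_n$, so that $g_n(x,y)=\frac{n^2}{2N}\bklr{1-\exp\bklr{-\frac{2N}{n^2}\^\kappa_n(x,y)}}$. I would argue by dominated convergence. By the Lebesgue density theorem, $\^\kappa_n(x,y)\to\-\kappa(x,y)$ for a.e.~$(x,y)$; combining this with the convergence of the scalar factors $\frac{n^2}{2N}$ and $\frac{2N}{n^2}$ forced by~\eqref{nNlim} with $\alpha=1$, together with the continuity of the exponential expression, one checks that $g_n(x,y)\to\^\kappa(x,y)$ for a.e.~$(x,y)$, with $\^\kappa$ the graphon in the statement. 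Moreover, from $1-e^{-u}\le1$ for $u\geq0$ we get $g_n\le\frac{n^2}{2N}$, which is bounded uniformly in~$n$ (it tends to a finite constant). Hence dominated convergence yields $\norm{g_n-\^\kappa}_1\to0$, so that $d_\square(H_n,\^\kappa)\le d_1(H_n,\^\kappa)=\norm{g_n-\^\kappa}_1\to0$.

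It is worth noting that, in contrast to the proof of Lemma~\ref{r3}, no truncation of $\-\kappa$ at a level~$\tau$ is needed here: because $\frac{2N}{n^2}$ stays bounded away from~$0$ when $\alpha=1$, the graphons $g_n$ are automatically uniformly bounded, which is exactly what makes the direct dominated-convergence step go through (in the sparse regime one only has $g_n\le\^\kappa_n$, and $\^\kappa_n$ need not be uniformly bounded, which is why the truncation was introduced there). Once the three displays above are available, the triangle inequality
\bes{
	&d_\square\bbbklr{\frac{n^2}{N}G_n,\^\kappa} \\
	&\qquad\leq
	d_\square\bbbklr{\frac{n^2}{N}G_n,\frac{n^2}{N}\IE G_n}
	+ d_\square\bbbklr{\frac{n^2}{N}\IE G_n,H_n}
	+ d_\square\bklr{H_n,\^\kappa}
}
gives the claim almost surely with respect to~$\IP$. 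The only real obstacle is this last lemma --- pinning down the a.e.\ pointwise limit of $g_n$, and hence the precise form of $\^\kappa$, from the Lebesgue density theorem together with the scalar limit in Assumption (N1), and then justifying the interchange of limit and integral --- while everything else is inherited, essentially verbatim, from the proof of Theorem~\ref{mt1}.
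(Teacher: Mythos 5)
Your proposal is correct and follows the paper's overall architecture exactly: Lemmas~\ref{r1} and~\ref{r2} are observed to survive the case $\alpha=1$ unchanged (for the same reasons you give), and the only new work is the analogue of Lemma~\ref{r3}, namely $d_\square(H_n,\^\kappa)\to0$. Where you diverge from the paper is in how this last step is carried out. The paper inserts the intermediate graphon $f_n(x,y)=\lambda^{-1}\bklr{1-e^{-\lambda\^\kappa_n(x,y)}}$ and splits $\norm{g_n-\^\kappa}_1\le\norm{g_n-f_n}_1+\norm{f_n-\^\kappa}_1$; the second term is controlled by the Lipschitz bound $\abs{e^{-z}-e^{-w}}\le\abs{z-w}$ together with $\norm{\^\kappa_n-\-\kappa}_1\to0$ from \cite[Lemma~5.6]{Borgs2014b}, and the first by the scalar inequality $\abs{xe^{-b/x}-ye^{-b/y}}\le\abs{x-y}$, which yields the uniform bound $2\abs{n^2/(2N)-1/\lambda}$. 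You instead argue directly: a.e.\ pointwise convergence of $g_n$ (Lebesgue density theorem for the block averages plus convergence of the scalar factors) together with the uniform bound $g_n\le n^2/(2N)$, and then dominated convergence. Both routes are valid; yours is somewhat more elementary in that it bypasses the intermediate graphon and the citation to \cite{Borgs2014b}, at the cost of using a.e.\ rather than $L_1$ convergence of $\^\kappa_n$, and your remark on why the truncation at level $\tau$ from Lemma~\ref{r3} becomes unnecessary is exactly the right observation. One point you should make explicit rather than assert: under Assumption (N1) with $\alpha=1$ one has $n^2/(2N)\to 1/(2\lambda)$ and $2N/n^2\to 2\lambda$, so the honest pointwise limit of $g_n$ is $(2\lambda)^{-1}\bklr{1-e^{-2\lambda\-\kappa(x,y)}}$, i.e.\ the stated $\^\kappa$ with $\lambda$ replaced by $2\lambda$. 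You claim without computation that the limit is the graphon in the statement, which glosses over this factor of two; the paper's own proof contains the same discrepancy (it writes $\abs{n^2/(2N)-1/\lambda}\to0$, which is inconsistent with \eq{nNlim}), so this is a shared slip rather than a defect of your approach, but it is worth flagging.
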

\begin{proof}
  The proof follows the same way as the proof of Theorem~\ref{mt1}. So we provide a sketch.

  Lemma~\ref{r1} and Lemma~\ref{r2} hold for~$\alpha =1$ case as
  well. Instead of
  Lemma~\ref{r3}, we have to show \bes{ \lim_{n \to \infty}
    d_\square(H_n, \^\kappa) = 0.  } Define the graphon $f_n$
  as~$f_n(x,y)=\lambda^{-1}\bklr{1-e^{-\lambda\^\kappa_n(x,y)}}$. Now,
  \ben{\label{fsr3b} d_1\bklr{H_n,\^\kappa} \,=\, \norm{ g_n -
      \^\kappa}_1 \,\leq\, \norm{g_n - f_n}_1 + \norm{f_n -
      \^\kappa}_1.  } Recall that~$\abs{e^{-z} - e^{-w}} \leq
  \abs{z-w}$ for all~$z,w>0$.  So, for any~$x,y \in [0,1]$, \be{
    \abs{f_n(x,y) -\^\kappa(x,y)} = \abs{e^{-\lambda \^\kappa_n(x,y)}
      - e^{-\lambda \bar{\kappa}(x,y)}} \leq \lambda
    \abs{\^\kappa_n(x,y) - \bar{\kappa}(x,y)}.  } By
  \cite[Lemma~5.6]{Borgs2014b}, $\norm{\^\kappa_n-\bar{\kappa}}_1\to
  0$. Hence, using the above this readily implies \ben{\label{fa1}
    \norm{f_n-\^{\kappa}}_1\to 0.  } Note that for~$b \geq 0$ and $x
  >0$, \be{ \abs{xe^{-b/x} - ye^{-b/y}} \leq \abs{ x - y }.  } So, for
  any~$x,y \in \IR$, we have \bes{ \babs{ g_n(x,y) -f_n(x,y)} & =
    \bbbabs{\frac{n^2}{2N}\bbklr{1-\exp\bbklr{-\frac{2N}{n^2}\^\kappa_n(x,y)}}-
      \frac{1}\lambda\bbklr{1-\exp\bklr{-\lambda\^\kappa_n(x,y)}}}\\ &\leq
    \bbbabs{\frac{n^2}{2N} - \frac{1}{\lambda} } +\bbbabs{
      \frac{n^2}{2N}\exp\bbklr{-\frac{2N}{n^2}\^\kappa_n(x,y)}-
      \frac{1}{\lambda}\exp\bklr{-\lambda\^\kappa_n(x,y)}}\\ &\leq
    2\bbabs{\frac{n^2}{2N} - \frac{1}{\lambda} }, } As~$\babs{n^2/(2N)
    - 1/\lambda} \to 0$, dominated convergence implies
  \ben{\label{ga1} \norm{g_n-f_n}_1\to 0.  } From this the result
  follows as in the proof of Theorem~\ref{mt1}
\end{proof}

We note that the Stein-Chen method plays a critical role in proof of Lemma~\ref{r2} when $\alpha =1$, as $\frac{N}{n^2} \to \lambda >0$; that is, the mean of the Poisson random variable does not converge to $0$, so that moment bounds would not suffice to prove Lemma~\ref{r2}.

\paragraph{Respondent Driven Sampling (RDS).} One common approach in RDS to correct for bias towards high degrees, is to ask participants of
the study to estimate their own degree and then weigh the participants
by the inverse of their reported degree. This procedure is known as
\emph{multiplicity sampling}, and was first used in the context of RDS
by \cite{Rothbart1982}. What Theorem~\ref{mt1} implies in essence is that one could also clump participants together according to general characteristics (such as age, gender, etc.). If the degree of the participants is captured by these characteristics, the bias towards participants with high degrees would disappear.

It was argued by \cite{Heckathorn2007} that multiplicity sampling
cannot in general correct for the bias towards nodes with
high degree due to possible \emph{differential recruitment}, which
means that some groups of participants are systematically able to
recruit more people than others. Other methods of estimations,
including the original estimators of \cite{Heckathorn1997} as well as the clumping procedure proposed in this article, are
equally susceptible to differential recruitment bias.

The mathematical reason behind this bias is that the stationary
distribution of a one-referral Markov process on a set of types, which
is the commonly used mathematical tool to derive RDS estimators, can
be different from the stationary distribution of a multi-type
branching process with the same transition probabilities if the
average number of offspring depends on the types. This was
described precisely by \cite{Athreya2016}, where the two models, a
one-referral Markov chain and Poisson-offspring branching process,
show substantially different over-sampling of high-degree vertices in
the network. In the one-referral Markov chain case, the over-sampling
is exactly proportional to the degree, but in the case of a Poisson
number of referrals, it is proportional to a quantity that is harder
to calculate (the eigenfunction of the mean replacement measure of the
branching process). In practice, differential recruitment bias is
typically reduced by limiting the number of referrals, traditionally
to no more than three.

\cite{Heckathorn2007} also proposes a method, called estimation through \emph{dual-components}, which is supposed to take differential recruitment into account. This is the default method used in the widely-used statistical software \emph{RDSAT} (see \cite{Volz2012}). The basic idea is to estimate the transition probabilities governing the referrals, calculate the proportion of different types one would expect to see under absence of both bias due to different degrees and bias due to differential recruitment, compare with the actual observed proportions, and then to work backwards to find the true proportions in the population. However, the theoretical justifications in \cite{Heckathorn2007} for the details of the procedure are somewhat opaque.

\paragraph{Open Problems.} We conclude the article with a couple of questions that can be explored.

\begin{enumerate}[label=(\arabic*)]
\item In \cite{Athreya2016} a rigourous framework was set up to handle convergence in dense graph limits. For dense graphs, the theory of graphons (whose range is $[0,1]$) was used to establish the convergence.  Graphons in dense graph setting characterise the limit via convergence of subgraph counts. This aspect applies under several equivalent metrics. One should be able to establish the RDS models used in \cite{Athreya2016} to prove convergence in the $L_1$ metric as in this article. The approach could be one as laid out in proof of \cite[Theorem 2.14]{Borgs2014a}.
\item As already mentioned before, in practice, an RDS sample comes typically in the form of a tree, rather than a single chain, and hence, a multi-type branching process, where the types could represent characteristics such as gender, age etc., would constitute a more realistic mathematical model. The stationary distribution of such a branching process is difficult to solve analytically in general, but under additional assumptions, such as considering only finitely many types, a numerical approach would definitely be feasible. In this light, it seems that a statistical theory based on branching process theory, rather than Markov chain theory, could put the framework of \emph{dual-components} from \cite{Heckathorn2007} onto solid ground or even improve on it.
\end{enumerate}


\begin{thebibliography}{12}
\providecommand{\natexlab}[1]{#1}
\providecommand{\url}[1]{\texttt{#1}}
\expandafter\ifx\csname urlstyle\endcsname\relax
  \providecommand{\doi}[1]{doi: #1}\else
  \providecommand{\doi}{doi: \begingroup \urlstyle{rm}\Url}\fi

\bibitem[Athreya and R{\"o}llin(2016)]{Athreya2016}
S.~Athreya and A.~R{\"o}llin (2016).
\newblock Dense graph limits under respondent-driven sampling.
\newblock \emph{Ann. App. Probab.} \textbf{26}, \penalty0 2193--2210.

\bibitem[Barbour et~al.(1992)Barbour, Holst and Janson]{Barbour1992}
A.~D. Barbour, L.~Holst and S.~Janson (1992).
\newblock \emph{Poisson approximation}.
\newblock Oxford University Press, New York.

\bibitem[Bollob{\'a}s and Riordan(2009)]{Bollobas2009}
B.~Bollob{\'a}s and O.~Riordan (2009).
\newblock Metrics for sparse graphs.
\newblock In \emph{Surveys in combinatorics 2009}, volume 365 of \emph{London
  Math. Soc. Lecture Note Ser.}, pages 211--287. Cambridge University Press,
  Cambridge.

\bibitem[Borgs et~al.(2014{\natexlab{a}})Borgs, Chayes, Cohn and
  Zhao]{Borgs2014a}
C.~Borgs, J.~T. Chayes, H.~Cohn and Y.~Zhao (2014{\natexlab{a}}).
\newblock An {$L^p$} theory of sparse graph convergence {I}: limits, sparse
  random graph models, and power law distributions.
\newblock \emph{arXiv preprint arXiv:1401.2906}.

\bibitem[Borgs et~al.(2014{\natexlab{b}})Borgs, Chayes, Cohn and
  Zhao]{Borgs2014b}
C.~Borgs, J.~T. Chayes, H.~Cohn and Y.~Zhao (2014{\natexlab{b}}).
\newblock An {$L^p$} theory of sparse graph convergence {II}: {LD} convergence,
  quotients, and right convergence.
\newblock \emph{arXiv preprint arXiv:1408.0744}.

\bibitem[Goldstein and Rinott(1996)]{Goldstein1996}
L.~Goldstein and Y.~Rinott (1996).
\newblock Multivariate normal approximations by {S}tein's method and size bias
  couplings.
\newblock \emph{J.~Appl. Probab.} \textbf{33}, \penalty0 1--17.

\bibitem[Heckathorn(1997)]{Heckathorn1997}
D.~D. Heckathorn (1997).
\newblock Respondent-driven sampling: A new approach to the study of hidden
  populations.
\newblock \emph{Soc. Probl.} \textbf{44}, \penalty0 pp. 174--199.

\bibitem[Heckathorn(2007)]{Heckathorn2007}
D.~D. Heckathorn (2007).
\newblock Extensions of respondent-driven sampling: analyzing continuous
  variables and controlling for differential recruitment.
\newblock \emph{Sociol. Methodol.} \textbf{37}, \penalty0 151--207.

\bibitem[Meyn and Tweedie(2009)]{Meyn2009}
S.~Meyn and R.~L. Tweedie (2009).
\newblock \emph{Markov Chains and Stochastic Stability}.
\newblock Cambridge University Press, 2nd edition.

\bibitem[Paulin(2015)]{Paulin2015}
D.~Paulin (2015).
\newblock Concentration inequalities for {M}arkov chains by {M}arton couplings
  and spectral methods.
\newblock \emph{Electron. J. Probab.} \textbf{20}.

\bibitem[Rothbart et~al.(1982)Rothbart, Fine and Sudman]{Rothbart1982}
G.~S. Rothbart, M.~Fine and S.~Sudman (1982).
\newblock On finding and interviewing the needles in the haystack: {T}he use of
  multiplicity sampling.
\newblock \emph{Public Opin. Quart.} \textbf{46}, \penalty0 408--421.

\bibitem[Volz et~al.(2012)Volz, Weijnert, Cameron, Spiller, Barash, Degani and
  Heckathorn]{Volz2012}
E.~Volz, C.~Weijnert, C.~Cameron, M.~Spiller, V.~Barash, I.~Degani and D.~D.
  Heckathorn (2012).
\newblock {Respondend-Driven Sampling Analysis Tool (RDSAT), {V}ersion 7.1}.
\newblock Ithaca, NY, Cornell University.

\end{thebibliography}

\end{document}